\newcommand{\zz}{\mathbb{Z}}
\newcommand{\nn}{\mathbb{N}}
\newcommand{\qq}{\mathbb{Q}}
\newcommand{\floor}[1]{\left\lfloor#1\right\rfloor}
\newcommand{\isom}{\cong}
\newcommand{\Gal}{\mathrm{Gal}}
\newcommand{\N}{\mathrm{N}}
\newcommand{\p}{\mathfrak{p}}
\newtheorem{lma}{Lemma}
\newtheorem{thm}{Theorem}
\theoremstyle{remark}
\newtheorem{rmk}{Remark}
\begin{document}
\title{A generalization of the Barban-Davenport-Halberstam Theorem to number 
fields\\
(Appeared in \textit{Journal of Number Theory})}

\author{Ethan Smith}
\address{
Department of Mathematical Sciences\\
Clemson University\\
Box 340975 Clemson, SC 29634-0975
}
\email{ethans@math.clemson.edu}
\urladdr{www.math.clemson.edu/~ethans}

\begin{abstract}
For a fixed number field $K$, we consider the mean square error in estimating
the number of primes with norm congruent to $a$ modulo $q$ by the 
Chebotar\"ev Density Theorem when averaging over all $q\le Q$ and all 
appropriate $a$.  
Using a large sieve inequality,
we obtain an upper bound similar to the 
Barban-Davenport-Halberstam Theorem.
\end{abstract}
\keywords{large sieve, Chebotar\"ev Density Theorem, 
Barban-Davenport-Halberstam Theorem}
\subjclass[2000]{11N36, 11R44}

\maketitle

\section{Introduction}

The mean square error in Dirichlet's Theorem for primes in arithmetic 
progressions was first studied by Barban~\cite{bar:1964} and by Davenport and 
Halberstam~\cite{DH:1966,DH:1968}.
Bounds such as the following are usually referred to as the 
Barban-Davenport-Halberstam Theorem, although this particular refinement 
is attributed to Gallagher.
Let
\begin{equation*}
\psi(x;q,a):=\sum_{\substack{p^m\le x,\\ p^m\equiv a\pmod q}}\log p.
\end{equation*}
Then, for fixed $M>0$,
\begin{equation}\label{bdh}
\sum_{q\le Q}\sum_{\substack{a=1,\\(a,q)=1}}^q
\left(\psi(x;q,a)-\frac{x}{\varphi(q)}\right)^2
\ll xQ\log x
\end{equation}
if $x(\log x)^{-M}\le Q\le x$.
See~\cite[p. 169]{daven:1980}.
Here $\varphi$ is the Euler totient function.  
The sum on the left may be viewed as the mean square error in the 
Chebotar\"ev Density Theorem when averaging over cyclotomic extensions of 
$\qq$.

The inequality in~\eqref{bdh} was later refined by Montgomery~\cite{Mon:1970} 
and Hooley~\cite{Hoo:1975}, both of whom gave asymptotic formulae valid for 
various ranges of $Q$.  See also~\cite[Theorem 1]{Cro:1975}.  
Montgomery's method is based on a result of Lavrik~\cite{Lav:1960} on the 
distribution of twin primes, while Hooley's method relies on the large sieve.
For recent work concerning such asymptotics, see~\cite{liu:2008}.

Results of this type have also been generalized to number fields.  Wilson 
considered error sums over prime ideals falling into a given class of the 
narrow ideal class group in~\cite{wilson:1969}.  While in~\cite{hinz:1981},
Hinz considered sums of principal prime ideals given
by a generator which is congruent to a given algebraic integer modulo an 
integral ideal and whose conjugates fall into a designated range. 

Let $K$ be a fixed algebraic number field which is normal over $\qq$.  In this 
paper, we consider the mean square error in the Chebotar\"ev Density Theorem 
when averaging over cyclotomic extensions of $K$.  That is, we consider sums of 
the form
\begin{equation*}
\psi_K(x;q,a):=\sum_{\substack{\N\mathfrak p^m\le x,\\ \N\mathfrak{p}^m\equiv a\pmod q}}
\log\N\mathfrak{p}.
\end{equation*}
Here the sum is over powers of 
prime ideals of $K$, and there is no restriction to principal primes. 

For each positive integer $q$, we let $A_q:=K\cap\qq(\zeta_q)$. 
So, $A_q$ is an Abelian (possibly trivial) 
extension of $\qq$. 
We have a natural composition of maps:
\begin{equation}\label{comp map}
\begin{diagram}
\node{\Gal(K(\zeta_q)/K)}\arrow{e,J}
\node{\Gal(\qq(\zeta_q)/\qq)}\arrow{e,t}{\sim}
\node{(\zz/q\zz)^*;}
\end{diagram}
\end{equation}
and in fact, 
$\Gal(K(\zeta_q)/K)\isom\Gal(\qq(\zeta_q)/A_q)$.
We let $G_q$ denote the image of composition of maps in~\eqref{comp map}.
Then, in particular, $G_q\isom\Gal(K(\zeta_q)/K)$.  See the diagram below.
\begin{equation*}
\begin{diagram}
\node{}\node{K(\zeta_q)}\node{}\\
\node{\qq(\zeta_q)}\arrow{ne,-}\node{}\node{K}\arrow{nw,t,-}{G_q}\\
\node{}\node{A_q}\arrow{nw,t,-}{G_q}\arrow{ne,-}\node{}\\
\node{}\node{\qq}\arrow{n,-}\arrow{nnw,b,-}{(\zz/q\zz)^*}\node{}
\end{diagram}
\end{equation*}
Define $\varphi_K(q):=|G_q|$.
By the Chebotar\"ev Density Theorem, for each $a\in G_q$,
\begin{equation*}
\psi_K(x;q,a)=\sum_{\substack{\N\p^m\le x,\\ \N\p^m\equiv a\pmod q}}\log\N\p
\sim\frac{x}{\varphi_K(q)}.
\end{equation*}

\begin{thm}\label{bdh_gen}
For a fixed $M>0$,
\begin{equation*}
\sum_{q\le Q}\sum_{a\in G_q}\left(
\psi_K(x;q,a)-\frac{x}{\varphi_K(q)}
\right)^2
\ll xQ\log x
\end{equation*}
if $x(\log x)^{-M}\le Q\le x$.
\end{thm}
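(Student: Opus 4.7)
The plan is to carry out Gallagher's version of the Barban--Davenport--Halberstam argument (as presented in~\cite{daven:1980}), with characters of the finite group $G_q$ playing the role of Dirichlet characters modulo $q$. The first step is orthogonality on $G_q$: since every prime $\p$ unramified in $K(\zeta_q)/K$ satisfies $\N\p \bmod q \in G_q$ (by the diagram preceding the theorem), one decomposes
\[
\psi_K(x;q,a) = \frac{1}{\varphi_K(q)} \sum_{\chi \in \widehat{G_q}} \overline{\chi(a)}\, \psi_K(x,\chi) + O(\log x \log q),
\]
where $\psi_K(x,\chi) := \sum_{\N\p^m \le x} \chi(\N\p^m) \log\N\p$. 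The principal character produces the main term $x/\varphi_K(q)$ up to an error $O(x\exp(-c\sqrt{\log x}))$ via Landau's prime ideal theorem for $K$. Parseval on $G_q$ then reduces the theorem to bounding
\[
V(Q) := \sum_{q \le Q} \frac{1}{\varphi_K(q)} \sum_{\chi \ne \chi_0} |\psi_K(x,\chi)|^2.
\]

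Since $G_q \le (\zz/q\zz)^*$, each $\chi \in \widehat{G_q}$ is the restriction of exactly $\varphi(q)/\varphi_K(q)$ Dirichlet characters modulo $q$, all taking the same value on the norm of every unramified prime ideal. Averaging over extensions rewrites $V(Q)$ as $\sum_{q \le Q} \frac{1}{\varphi(q)} \sum_{\tilde\chi \bmod q}^{\dagger} |\psi_K(x,\tilde\chi)|^2$, where $\dagger$ omits the Dirichlet characters trivial on $G_q$; these factor through $\Gal(A_q/\qq)$, are of conductor dividing the conductor of $K \cap \qq^{\mathrm{ab}}$, and are handled as a bounded-conductor remainder. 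Each remaining $\tilde\chi$ is induced by a unique primitive character $\chi^*$ of conductor $q^* \mid q$, and using $|\psi_K(x,\tilde\chi) - \psi_K(x,\chi^*)| \ll \log x \log q$ together with $\sum_{q \le Q,\, q^* \mid q} 1/\varphi(q) \ll \log(2Q/q^*)/\varphi(q^*)$, one reduces the problem to bounding
\[
\sum_{q^* \le Q} \frac{\log(2Q/q^*)}{\varphi(q^*)} \sum_{\chi^* \bmod q^*}^{*} |\psi_K(x,\chi^*)|^2.
\]

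The large sieve inequality is then applied to the nonnegative sequence $a_n := \sum_{\N\p^m = n} \log\N\p$, which satisfies $a_n \le [K:\qq]\log n$ and hence $\sum_{n \le x} a_n^2 \ll x\log x$. Partial summation in $q^*$, exactly as in Gallagher, converts the large-sieve bound into an estimate of $V(Q)$ that is off by a single logarithm from the target $xQ\log x$; the saving is recovered by splitting off small moduli $q^* \le (\log x)^A$ with $A$ a suitable multiple of $M$, and bounding $|\psi_K(x,\chi^*)| \ll x\exp(-c\sqrt{\log x})$ on that range by a Siegel--Walfisz estimate coming from the standard zero-free region for Hecke $L$-functions attached to cyclotomic extensions of $K$ (equivalently, from an effective Chebotar\"ev density theorem). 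The main technical obstacle I anticipate is the bookkeeping in the passage from characters of $G_q$ to primitive Dirichlet characters: matching the weight $1/\varphi_K(q)$ carried by the original sum with the weight $q^*/\varphi(q^*)$ naturally supplied by the large sieve, tracking the $\varphi(q)/\varphi_K(q)$ extensions, and isolating the bounded-conductor characters trivial on $G_q$. Once this accounting is set up, the classical Gallagher argument proceeds without further change.
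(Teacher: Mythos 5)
Your proposal is correct and follows essentially the same route as the paper: orthogonality and Parseval over $\widehat{G_q}$, conversion to Dirichlet characters modulo $q$ via the $\varphi(q)/\varphi_K(q)$-fold lifting, reduction to primitive characters, the large sieve applied to $a_n=\sum_{\N\p^m=n}\log\N\p$ for large moduli, and a Siegel--Walfisz-type bound from the Hecke $L$-function zero-free region for moduli up to a power of $\log x$. The only cosmetic difference is that the paper absorbs the characters trivial on $G_q$ by subtracting $x$ in the definition of $\psi_K'$ rather than splitting them off as a bounded-conductor remainder.
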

\begin{rmk}
Note that the above agrees with~\eqref{bdh} when $K=\qq$.
The method of proof is essentially an adaptation of 
the proof of~\eqref{bdh} given in~\cite[pp. 169-171]{daven:1980}, the main 
idea being an application of the large sieve.
\end{rmk}
\begin{rmk}
We say nothing about the constant implied by the symbol $\ll$ in the present 
paper.  However, the author has recently adapted the methods of 
Hooley~\cite{Hoo:1975} to refine the result into an asymptotic formula.
This work is the subject of a forthcoming paper.
\end{rmk}
\begin{rmk}
The above result is unconditional and gives a better bound than the Grand 
Riemann Hypothesis (GRH).  See Section~\ref{GRH} for comparison with GRH.
\end{rmk}

\section{Preliminaries and Intermediate Estimates}

We will use lower case Roman letters for rational integers and Fraktur letters
for ideals of the number field $K$.  In particular, $p$ will always denote a 
rational prime and $\mathfrak{p}$ will always denote a prime ideal in 
$\mathcal{O}_K$, the ring of integers of $K$.  We also let
$g(K/\qq ; p)$ and $f(K/\qq ; p)$ denote the number of primes of $K$ lying 
above $p$ and the degree of any prime of $K$ lying above $p$, respectively.
Note that $f(K/\qq; p)$ is well-defined since $K$ is normal over $\qq$.

Let $\mathcal{X}(q)$ denote the character group modulo $q$, 
$\mathcal X^*(q)$ the characters which are primitive modulo
$q$, and let $G_q^\perp$ denote the subgroup of characters that are 
trivial on $G_q$.  Then the character group $\widehat{G_q}$ is 
isomorphic to $\mathcal X(q)/G_q^\perp$, and the number of such characters is 
$\varphi_K(q)=|G_q|=\varphi(q)/|G_q^\perp|$.
As usual, we denote the trivial character of the group 
$\mathcal X(q)$ by $\chi_0$.

For any Hecke character $\xi$ on the ideals of $\mathcal{O}_K$, we define
\begin{equation*}
\psi_K(x,\xi)
:=\sum_{\N\mathfrak{a}\le x}\xi(\mathfrak{a})\Lambda_K(\mathfrak a);
\end{equation*}
and for each character $\chi\in\mathcal X(q)$, we define
\begin{equation*}
\psi_K'(x,\chi\circ\N):=\begin{cases}
\psi_K(x,\chi\circ\N),& \chi\not\equiv\chi_0\pmod{G_q^\perp},\\
\psi_K(x,\chi\circ\N)-x,& \chi\equiv\chi_0\pmod{G_q^\perp}.
\end{cases}
\end{equation*}
Here, $\Lambda_K$ is the von Mangoldt function defined on the ideals of 
$\mathcal{O}_K$, i.e.,
\begin{equation*}
\Lambda_K(\mathfrak a):=\begin{cases}
\log\N\mathfrak p,& \mathfrak a=\mathfrak p^m,\\
0,&\mathrm{otherwise}.
\end{cases}
\end{equation*}

\begin{lma}\label{lsieve_app}
\begin{equation*}
\sum_{q\le Q}\frac{q}{\varphi(q)}\sum_{\chi\in\mathcal X^*(q)}|\psi_K(x,\chi\circ\N)|^2
\ll (x+Q^2)x\log x.
\end{equation*}
\end{lma}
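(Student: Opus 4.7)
The strategy is to recognize the inner sum as a Dirichlet polynomial in a rational variable and then apply the classical large sieve inequality for primitive Dirichlet characters modulo $q$. Since $\chi\circ\N$ evaluated at an ideal $\mathfrak a$ depends only on the rational integer $\N\mathfrak a$, I would first group ideals by their norm and rewrite
\[
\psi_K(x,\chi\circ\N)=\sum_{n\le x}b_n\chi(n),\qquad b_n:=\sum_{\N\mathfrak a=n}\Lambda_K(\mathfrak a).
\]
The standard large sieve inequality then gives
\[
\sum_{q\le Q}\frac{q}{\varphi(q)}\sum_{\chi\in\mathcal X^*(q)}|\psi_K(x,\chi\circ\N)|^2\ll (x+Q^2)\sum_{n\le x}b_n^2,
\]
so it suffices to prove $\sum_{n\le x}b_n^2\ll x\log x$.

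For the second-moment estimate on $b_n$ I would exploit the Galois structure of $K/\qq$. Because $\Lambda_K$ is supported on prime-power ideals, $b_n$ vanishes unless $n=p^k$ is a rational prime power. In that case, every prime $\mathfrak p$ of $K$ above $p$ shares the same residue degree $f=f(K/\qq;p)$, and any $\mathfrak a=\mathfrak p^m$ with $\N\mathfrak a=p^k$ forces $fm=k$. Hence $b_{p^k}=0$ unless $f\mid k$, and otherwise each of the $g=g(K/\qq;p)$ primes above $p$ contributes $\log\N\mathfrak p=f\log p$, giving
\[
b_{p^k}=f(K/\qq;p)\,g(K/\qq;p)\log p\le [K:\qq]\log p,
\]
where I have used the identity $efg=[K:\qq]$ valid in a Galois extension. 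In particular $b_n\le[K:\qq]\Lambda(n)$ for every $n$, so the desired estimate reduces to the classical Chebyshev-type bound $\sum_{n\le x}\Lambda(n)^2\ll x\log x$.

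The combinatorial identification of $b_{p^k}$ is the only step genuinely specific to the number-field setting; the large sieve and the estimate $\sum_{n\le x}\Lambda(n)^2\ll x\log x$ are off-the-shelf. The only point to guard against is that $\chi\circ\N$ is a Hecke character of $K$ rather than a Dirichlet character of $\qq$, but since its value depends only on $\N\mathfrak a$, the reduction to an ordinary Dirichlet polynomial on rational integers is immediate, and no large sieve for ideal classes is required.
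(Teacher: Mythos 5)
Your proof is correct and follows essentially the same route as the paper: your coefficient $b_n=\sum_{\N\mathfrak a=n}\Lambda_K(\mathfrak a)$ is exactly the paper's $D_K(n)\Lambda_K^*(n)$, the large sieve is applied in the identical form, and the bound $b_{p^k}=f(K/\qq;p)g(K/\qq;p)\log p\le[K:\qq]\log p$ reduces the second moment to the same Chebyshev-type estimate $\sum_{n\le x}\Lambda(n)^2\ll x\log x$ that the paper phrases as $\sum_{\N\mathfrak p^m\le x}\Lambda_K(\mathfrak p^m)^2\ll x\log x$. The only differences are cosmetic.
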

\begin{proof}
For $n\in\nn$, we define
\begin{eqnarray*}
D_K(n)&:=&\#\{\mathfrak{p}^m\lhd\mathcal{O}_K:\N\mathfrak{p}^m =n\};\\
\Lambda_K^*(n)&:=&\begin{cases}
\log p^{f(K/\qq;p)},& n=p^k,\\
0,&\mathrm{otherwise}.
\end{cases}
\end{eqnarray*} 
Now, note that
\begin{eqnarray*}
\psi_K(x,\chi\circ\N)
=\sum_{\N\mathfrak{a}\le x}\chi(\N\mathfrak a)\Lambda_K(\mathfrak a)
=\sum_{n\le x}\chi(n)D_K(n)\Lambda_K^*(n).
\end{eqnarray*}
We apply the large sieve in the form 
of Theorem 4 in chapter 27 of~\cite{daven:1980} to see that
\begin{eqnarray*}
\sum_{q\le Q}\frac{q}{\varphi(q)}\sum_{\chi\in\mathcal X^*(q)}|\psi_K(x,\chi\circ\N)|^2
&\ll&(x+Q^2)\sum_{n\le x}\left(D_K(n)\Lambda_K^*(n)\right)^2\\
&=&(x+Q^2)\sum_{p^k\le x}
g(K/\qq; p)
D_K(p^k)\Lambda_K^*(p^k)^2\\
&\ll&(x+Q^2)
\sum_{\N\mathfrak p^m\le x}\Lambda_K(\mathfrak p^m)^2\\
&\ll&(x+Q^2)x\log x.
\end{eqnarray*}
\end{proof}

\begin{lma}\label{char_exchange}
If $\xi_1$ and $\xi_2$ are Hecke characters modulo $\mathfrak{q}_1$ and 
$\mathfrak{q}_2$ respectively, and if $\xi_1$ induces $\xi_2$, then 
\begin{equation*}
\psi_K(x,\xi_2)=\psi_K(x,\xi_1)+O\left((\log qx)^{2}\right),
\end{equation*}
where $(q)=\mathfrak{q}_2\cap\zz$.
\end{lma}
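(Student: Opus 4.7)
The plan is to use that, by definition, when $\xi_1$ induces $\xi_2$ we have $\mathfrak{q}_1 \mid \mathfrak{q}_2$ and $\xi_1(\mathfrak{a}) = \xi_2(\mathfrak{a})$ for every ideal $\mathfrak{a}$ coprime to $\mathfrak{q}_2$, while $\xi_2(\mathfrak{a}) = 0$ whenever $(\mathfrak{a}, \mathfrak{q}_2) \neq \ints_K$. Subtracting the defining sums, the terms with $(\mathfrak{a}, \mathfrak{q}_2) = \ints_K$ cancel, so
\[
\psi_K(x, \xi_1) - \psi_K(x, \xi_2) = \sum_{\substack{\N\mathfrak{a} \le x \\ (\mathfrak{a}, \mathfrak{q}_2) \neq \ints_K}} \xi_1(\mathfrak{a})\, \Lambda_K(\mathfrak{a}).
\]
Because $\Lambda_K$ vanishes off prime powers, the remaining sum is supported on $\mathfrak{p}^m$ with $\mathfrak{p} \mid \mathfrak{q}_2$ (and automatically $\mathfrak{p} \nmid \mathfrak{q}_1$, since $\xi_1$ vanishes on ideals not coprime to $\mathfrak{q}_1$).

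Using the trivial bound $|\xi_1(\mathfrak{a})| \le 1$, I would estimate the absolute value of the difference by
\[
\sum_{\mathfrak{p} \mid \mathfrak{q}_2} \;\sum_{\substack{m \ge 1 \\ \N\mathfrak{p}^m \le x}} \log \N\mathfrak{p} \;\le\; \omega(\mathfrak{q}_2)\, \log x,
\]
since for each $\mathfrak{p}$ the inner sum equals $\lfloor \log x / \log \N\mathfrak{p} \rfloor \log \N\mathfrak{p} \le \log x$.

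It remains to bound $\omega(\mathfrak{q}_2)$ in terms of $q$. Because $\mathfrak{q}_2 \cap \zz = (q)$, every prime $\mathfrak{p} \mid \mathfrak{q}_2$ lies above some rational prime $p \mid q$, and each such $p$ has at most $[K:\qq]$ primes of $\ints_K$ above it. Thus $\omega(\mathfrak{q}_2) \le [K:\qq]\, \omega(q) \ll \log q$, and the difference is $O(\log q \cdot \log x) = O((\log qx)^2)$. There is no substantial obstacle: the heart of the matter is the cancellation on ideals coprime to $\mathfrak{q}_2$, after which the problem reduces to the elementary estimate above for the finitely many prime-power ideals not coprime to $\mathfrak{q}_2$ with norm at most $x$.
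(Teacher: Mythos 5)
Your proof is correct and takes essentially the same route as the paper: the difference $\psi_K(x,\xi_1)-\psi_K(x,\xi_2)$ is identified as the sum over prime-power ideals $\mathfrak{p}^m$ with $\mathfrak{p}\mid\mathfrak{q}_2$, each such $\mathfrak{p}$ lies over a rational prime $p\mid q$ with at most $[K:\qq]$ primes above it, and each contributes at most $\log x$, giving $O(\log q\cdot\log x)=O((\log qx)^2)$. The paper phrases the count via $D_K(p^k)$, $g(K/\qq;p)$, and $f(K/\qq;p)$, but the underlying estimate is the same.
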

\begin{proof}
\begin{alignat*}{2}
|\psi_K(x,\xi_1)-\psi_K(x,\xi_2)| &=
\left|\sum_{\substack{\N\mathfrak{p}^m\le x,\\(\mathfrak{p},\mathfrak{q}_2)>1}}
\xi_1(\mathfrak{p}^m)\log\N\mathfrak{p}\right|\nonumber
\le\sum_{\substack{p^k\le x,\\ (p,q)>1}}D_K(p^k)f(K/\qq;p)\log p\nonumber\\
&=\sum_{p|q}\sum_{\substack{k=1,\\ f(K/\qq;p)|k}}^{\floor{\log x/\log p}}
g(K/\qq; p)f(K/\qq; p)\log p
\nonumber\\
&\ll\sum_{p|q}\floor{\frac{\log x}{\log p}}\log p
\ll(\log qx)^{2}.
\end{alignat*}
\end{proof}

\begin{lma}\label{psi_bound}
If $\chi$ is a character modulo $q\le(\log x)^{M+1}$, then there exists a 
positive constant $C$ (depending on $M$) such that
\begin{equation*}
\psi_K'(x,\chi\circ\N)\ll x\exp\left\{-C\sqrt{\log x}\right\}.
\end{equation*}
\end{lma}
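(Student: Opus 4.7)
The plan is to reduce the estimate to a Siegel--Walfisz theorem for Hecke $L$-functions of $K$. Regard $\chi\circ\N$ as a finite-order Hecke character on the ideals of $\ints$. Its conductor $\mathfrak{f}$ divides $(q)\ints$: if $\alpha\equiv 1\pmod{q\ints}$ then every Galois conjugate of $\alpha$ satisfies the same congruence, forcing $\N\alpha\in 1+q\zz$ and hence $\chi(\N\alpha)=1$. In particular $\N\mathfrak{f}\le q^{[K:\qq]}\le(\log x)^{(M+1)[K:\qq]}$.

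Let $\xi$ denote the primitive Hecke character inducing $\chi\circ\N$. Applying Lemma~\ref{char_exchange} with $\xi_1=\xi$ and $\xi_2=\chi\circ\N$ gives $\psi_K(x,\chi\circ\N)=\psi_K(x,\xi)+O((\log qx)^2)$. Moreover, $\xi$ is the trivial character (of conductor $(1)$) precisely when $\chi\circ\N$ takes the value $1$ on every ideal coprime to $q$; since by the Chebotar\"ev density theorem the residues $\N\mathfrak{p}\bmod q$ with $(\mathfrak{p},q)=1$ cover all of $G_q$, this happens if and only if $\chi\in G_q^\perp$. Writing $\delta(\xi)=1$ when $\xi$ is trivial and $0$ otherwise, I obtain
\begin{equation*}
\psi_K'(x,\chi\circ\N)=\psi_K(x,\xi)-\delta(\xi)x+O\bigl((\log qx)^2\bigr).
\end{equation*}

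The remaining step is the classical Siegel--Walfisz theorem for Hecke $L$-functions of $K$: for every fixed $B>0$, there exists $C>0$ (depending on $B$ and $K$) such that
\begin{equation*}
\psi_K(x,\xi)-\delta(\xi)x\ll x\exp\bigl(-C\sqrt{\log x}\bigr),
\end{equation*}
uniformly for all primitive finite-order Hecke characters $\xi$ of $K$ whose conductor has norm at most $(\log x)^B$. This combines the Hadamard--de la Vall\'ee Poussin zero-free region for $L(s,\xi)$ with Siegel's theorem to rule out the possible exceptional real zero, followed by the standard contour-integration argument. Taking $B=(M+1)[K:\qq]$ and noting that $(\log qx)^2$ is negligible completes the proof. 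The main obstacle is the Siegel--Walfisz input itself, which requires both the number-field zero-free region and the (ineffective) Siegel bound on exceptional real zeros of $L(s,\xi)$; once these are granted, the remainder is routine bookkeeping with conductors together with the conductor--exchange estimate of Lemma~\ref{char_exchange}.
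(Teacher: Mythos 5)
Your proposal is correct and follows essentially the same route as the paper: reduce to the primitive Hecke character $\xi$ inducing $\chi\circ\N$ via Lemma~\ref{char_exchange}, control the conductor by $\mathfrak{f}\mid q\mathcal{O}_K$, and then apply a Siegel--Walfisz-type bound built from the zero-free region plus a Siegel-type bound on the exceptional real zero. The only difference is that you quote this last input as a single known theorem, whereas the paper assembles it explicitly from the explicit formula (\cite[Theorem 5.13]{IK:2004}), the zero-free region (\cite[Theorem 5.35]{IK:2004}), and Goldstein's bound $\beta_\xi<1-c_\epsilon(\N\mathfrak{f}_\chi)^{-\epsilon}$.
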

\begin{proof}
As a Hecke character on the ideals of $\mathcal{O}_K$, 
$\chi\circ\N$ may not be primitive modulo $q\mathcal{O}_K$.  
Let $\xi=\xi_\chi$ be the primitive Hecke character which induces 
$\chi\circ\N$,
and let $\mathfrak{f}_\chi$ be its conductor.
Write $s=\sigma+it$.
By~\cite[Theorem 5.35]{IK:2004}, there exists an effective constant 
$c_0>0$ such that the Hecke $L$-function
$L(s,\xi):=\sum_{\N\mathfrak a\le x}\xi(\mathfrak a)(\N\mathfrak a)^{-s}$ 
has at most one zero in the region
\begin{equation}\label{zero-free region}
\sigma>1-\frac{c_0}{[K:\qq]\log\left(|d_K|\N\mathfrak f(|t|+3)\right)},
\end{equation}
where $d_K$ denotes the discriminant of the number field.
Further, if such a zero exists, it is real and simple.
In the case that such a zero exists, we call it an ``exceptional zero" and 
denote it by $\beta_\xi$.
Thus, by~\cite[Theorem 5.13]{IK:2004}, 
there exists $c_1>0$ such that 
\begin{equation*}
\psi_K(x,\xi)
=\delta_\xi x-\frac{x^{\beta_\xi}}{\beta_\xi}
+O\left(x\exp\left\{
\frac{-c_1\log x}{\sqrt{\log x}+\log\N\mathfrak{f}_\chi}
\right\}(\log(x\N\mathfrak{f}_\chi))^4\right),
\end{equation*}
where 
\begin{equation*}
\delta_\xi=\begin{cases}
1, & \xi\mbox{ trivial},\\
0, & \mbox{otherwise},
\end{cases}
\end{equation*}
and the term $x^{\beta_\xi}/\beta_\xi$ is omitted if the $L$-function 
$L(s,\xi)$
has no exceptional zero in the region~\eqref{zero-free region}.  
Now, since $\mathfrak{f}_\chi|q\mathcal{O}_K$ and $q\le(\log x)^{M+1}$, we 
have the following bound on the error term: 
\begin{equation*}
x\exp\left\{
\frac{-c_1\log x}{\sqrt{\log x}+\log\N\mathfrak{f}_\chi}
\right\}(\log(x\N\mathfrak{f}_\chi))^4
\ll x\exp\left\{-c_2\sqrt{\log x}\right\}
\end{equation*}
for some positive constant $c_2$.

By~\cite[Theorem 3.3.2]{gold:1970}, we see that
for every $\epsilon>0$, 
there exists a constant $c_\epsilon>0$ such that 
if $\beta_\xi$ is an exceptional zero for $L(s,\xi)$, then
\begin{equation*}
\beta_\xi < 1-\frac{c_\epsilon}{(\N\mathfrak{f}_\chi)^\epsilon}
\le 1-\frac{c_\epsilon}{q^{[K:\qq]\epsilon}}.
\end{equation*}
Thus, 
\begin{equation*}
x^{\beta_\xi}<x\exp\left\{-c_\epsilon(\log x)q^{-[K:\qq]\epsilon}\right\}
<x\exp\left\{-c_\epsilon(\log x)^{1/2}\right\}
\end{equation*}
upon choosing $\epsilon$ so that $[K:\qq]\epsilon=(2M+2)^{-1}$.
Whence, for $q\le(\log x)^{M+1}$, there exists $C>0$ such that
\begin{equation*}
\psi_K(x,\xi)
=\delta_\xi x
+O\left(x\exp\left\{-C\sqrt{\log x}\right\}\right).
\end{equation*}
Therefore, by Lemma~\ref{char_exchange},
\begin{equation*}
\psi'_K(x,\chi\circ\N)\ll x\exp\left\{-C\sqrt{\log x}\right\}
\end{equation*}
for $q\le (\log x)^{M+1}$.
\end{proof}

\section{Proof of Main Theorem}

For $a\in G_q$, we define the error term
$E_K(x;q,a):=\psi_K(x;q,a)-\frac{x}{\varphi_K(q)}$,
and note that
\begin{equation*}
E_K(x;q,a)
=\frac{1}{\varphi_K(q)}
\sum_{\chi\in\widehat{G_q}}\bar{\chi}(a)\psi'_K(x,\chi\circ\N).
\end{equation*}
Now we form the square of the Euclidean norm and sum over all 
$a\in G_q$ to see
\begin{eqnarray*}
\sum_{a\in G_q}\left|E_K(x;q,a)\right|^2&=&
\frac{1}{\varphi_K(q)^2}
\sum_{a\in G_q}
\left|\sum_{\chi\in\widehat{G_q}}\bar\chi(a)\psi'_K(x,\chi\circ\N)\right|^2\\
&=&
\frac{1}{\varphi_K(q)^2}
\sum_{a\in G_q}
\sum_{\chi_1\in\widehat{G_q}}
\sum_{\chi_2\in\widehat{G_q}}
\bar\chi_1(a)\chi_2(a)
\psi'_K(x,\chi_1\circ\N)\overline{\psi'_K(x,\chi_2\circ\N)}\\
&=&
\frac{1}{\varphi_K(q)}
\sum_{\chi\in\widehat{G_q}}\left|\psi'_K(x,\chi\circ\N)\right|^2\\
&=&\frac{1}{\varphi(q)}\sum_{\chi\in\mathcal X(q)}\left|\psi'_K(x,\chi\circ\N)\right|^2.
\end{eqnarray*}
For each $\chi\in\mathcal X(q)$, we let $\chi_*$ denote the primitive character which 
induces $\chi$.  By Lemma~\ref{char_exchange}, we have
$\psi_K'(x,\chi\circ\N)
=\psi_K'(x,\chi_*\circ\N)+O\left((\log qx)^{2}\right)$.
Hence, summing over $q\le Q$ and exchanging each character for its primitive 
version, we have
\begin{equation*}
\sum_{q\le Q}\sum_{a\in G_q}E_K(x;q,a)^2
\ll\sum_{q\le Q}(\log qx)^{4}
+\sum_{q\le Q}\frac{1}{\varphi(q)}\sum_{\chi\in\mathcal X(q)}
|\psi_K'(x,\chi_*\circ\N)|^2.
\end{equation*}
The first term on the right is clearly smaller than $xQ\log x$, so 
we concentrate on the second.
Now,
\begin{eqnarray}
\sum_{q\le Q}\frac{1}{\varphi(q)}\sum_{\chi\in\mathcal X(q)}|\psi_K'(x,\chi_*\circ\N)|^2
&=&\sum_{q\le Q}\sum_{\chi\in\mathcal X^*(q)}|\psi_K'(x,\chi\circ\N)|^2
\sum_{k\le Q/q}\frac{1}{\varphi(kq)}\nonumber \\
&\ll&\sum_{q\le Q}\frac{1}{\varphi(q)}\left(\log\frac{2Q}{q}\right)
\sum_{\chi\in\mathcal X^*(q)}|\psi_K'(x,\chi\circ\N)|^2\label{eqn}
\end{eqnarray}
since $\sum_{k\le Q/q}1/\varphi(kq)\ll\varphi(q)^{-1}\log(2Q/q)$.
See~\cite[p. 170]{daven:1980}.  The proof will be complete once we show
that~\eqref{eqn} is smaller than $xQ\log x$ for $Q$ in the 
specified range.

As with the proof of~\eqref{bdh} in~\cite[pp. 169-171]{daven:1980}, we consider large and small $q$ separately.  We start with the large values.  Since 
$\psi_K'(x,\chi\circ\N)\ll\psi_K(x,\chi\circ\N)$, by
Lemma~\ref{lsieve_app}, we have
\begin{eqnarray*}
\sum_{U<q\le 2U}\frac{U}{\varphi(q)}\sum_{\chi\in\mathcal X^*(q)}
|\psi'_K(x,\chi\circ\N)|^2
&\ll&(x+U^2)x\log x,
\end{eqnarray*}
which implies
\begin{equation*}
\sum_{U\le q\le 2U}\frac{1}{\varphi(q)}\left(\log\frac{2Q}{q}\right)
\sum_{\chi\in\mathcal X^*(q)}|\psi_K'(x,\chi\circ\N)|^2
\ll(xU^{-1}+U)x\log x\left(\log\frac{2Q}{U}\right)
\end{equation*}
for $1\le 2U\le Q$.  Summing over $U=Q2^{-k}$, we have
\begin{eqnarray}\label{largeq}
\sum_{Q_1<q\le Q}\frac{1}{\varphi(q)}\left(\log\frac{2Q}{q}\right)
\sum_{\chi\in\mathcal X^*(q)}|\psi_K'(x,\chi\circ\N)|^2
&\ll&x\log x
\sum_{k=1}^{\floor{\frac{\log(Q/Q_1)}{\log 2}}}(x2^kQ^{-1}+Q2^{-k})\nonumber\\
&\ll&
x^2Q_1^{-1}(\log x)\log Q+xQ\log x\nonumber\\
&\ll& xQ\log x
\end{eqnarray}
if $x(\log x)^{-M}\le Q\le x$ and $Q_1=(\log x)^{M+1}$.

We now turn to the small values of $q$.  Applying Lemma~\ref{psi_bound}, we have
\begin{eqnarray}\label{smallq}
\sum_{q\le Q_1}\frac{1}{\varphi(q)}\left(\log\frac{2Q}{q}\right)
\sum_{\chi\in\mathcal X^*(q)}|\psi_K'(x,\chi\circ\N)|^2
&\ll&Q_1(\log Q)\left(x\exp\left\{-c\sqrt{\log x}\right\}\right)^2
\nonumber\\
&\ll&x^2(\log x)^{-M}
\ll xQ\log x.
\end{eqnarray}
Combining~\eqref{largeq} and~\eqref{smallq}, the theorem follows.
\hfill $\Box$

\section{Comparison with GRH}\label{GRH}

Using the bound on the analytic conductor of the $L$-function 
$L(s,\chi\circ\N)$ given in~\cite[p. 129]{IK:2004}, GRH implies
\begin{eqnarray*}
\sum_{q\le Q}\sum_{a\in G_q}E_K(x;q,a)^2
&=&\sum_{q\le Q}\frac{1}{\varphi(q)}
\sum_{\chi\in\mathcal X(q)}\left|\psi'_K(x,\chi\circ\N)\right|^2\\
&\ll&(\sqrt x(\log x)^2)^2\sum_{q\le Q}\frac{1}{\varphi(q)}
\sum_{\chi\in\mathcal X(q)}1\\
&=& xQ(\log x)^4.
\end{eqnarray*}
See~\cite[Theorem 5.15]{IK:2004} for this implication of GRH.

\section{Acknowledgment}
The main theorem of this paper is an estimate required by the author's work in 
his PhD dissertation.  The author is grateful for the guidance of his advisor, 
Kevin James.

\bibliographystyle{plain}
\bibliography{references}

\def\cprime{$'$}
\begin{thebibliography}{10}

\bibitem{bar:1964}
M.B. Barban.
\newblock On the distribution of primes in arithmetic progressions ``on
  average".
\newblock {\em Dokl. Akad. Nauk UzSSR}, 5:5--7, 1964.
\newblock (Russian).

\bibitem{Cro:1975}
M.~J. Croft.
\newblock Square-free numbers in arithmetic progressions.
\newblock {\em Proc. London Math. Soc. (3)}, 30:143--159, 1975.

\bibitem{DH:1966}
H.~Davenport and H.~Halberstam.
\newblock Primes in arithmetic progressions.
\newblock {\em Michigan Math. J.}, 13:485--489, 1966.

\bibitem{DH:1968}
H.~Davenport and H.~Halberstam.
\newblock Corrigendum: ``{P}rimes in arithmetic progression''.
\newblock {\em Michigan Math. J.}, 15:505, 1968.

\bibitem{daven:1980}
Harold Davenport.
\newblock {\em Multiplicative Number Theory}.
\newblock Springer-Verlag, New York, 1980.

\bibitem{gold:1970}
Larry~Joel Goldstein.
\newblock A generalization of the {S}iegel-{W}alfisz theorem.
\newblock {\em Trans. Amer. Math. Soc.}, 149:417--429, 1970.

\bibitem{hinz:1981}
J{\"u}rgen~G. Hinz.
\newblock On the theorem of {B}arban and {D}avenport-{H}alberstam in algebraic
  number fields.
\newblock {\em J. Number Theory}, 13(4):463--484, 1981.

\bibitem{Hoo:1975}
Christopher Hooley.
\newblock On the {B}arban-{D}avenport-{H}alberstam theorem. {I}.
\newblock {\em J. Reine Angew. Math.}, 274/275:206--223, 1975.
\newblock Collection of articles dedicated to Helmut Hasse on his seventy-fifth
  birthday, III.

\bibitem{IK:2004}
Henryk Iwaniec and Emmanuel Kowalski.
\newblock {\em Analytic Number Theory}, volume~53 of {\em Colloquium
  Publications}.
\newblock American Mathematical Society, Providence, 2004.

\bibitem{Lav:1960}
A.~F. Lavrik.
\newblock On the twin prime hypothesis of the theory of primes by the method of
  {I}. {M}. {V}inogradov.
\newblock {\em Soviet Math. Dokl.}, 1:700--702, 1960.

\bibitem{liu:2008}
H.-Q. Liu.
\newblock Barban-{D}avenport-{H}alberstam average sum and exceptional zero of
  ${L}$-functions.
\newblock {\em J. Number Theory}, 121(4):1044--1059, 2008.

\bibitem{Mon:1970}
H.~L. Montgomery.
\newblock Primes in arithmetic progressions.
\newblock {\em Michigan Math. J.}, 17:33--39, 1970.

\bibitem{wilson:1969}
Robin~J. Wilson.
\newblock The large sieve in algebraic number fields.
\newblock {\em Mathematika}, 16:189--204, 1969.

\end{thebibliography}
\end{document}